\newtheorem*{thm}{Theorem}
\newtheorem*{lemma}{Lemma}
\theoremstyle{definition}
\theoremstyle{remark}
\DeclareMathOperator{\spasn}{span}
\newcommand {\myvec}[1] {{\mbox{\boldmath $#1$}}}
\newcommand{\rev}{\color{black}}
\begin{document}
	\begin{frontmatter}
		\title{Randomly Aggregated Least Squares\\ for Support Recovery}

		\author{Ofir Lindenbaum %
			\fnref{fn1}}
		\ead{ofir.lindenbaum@yale.edu}
		
		\author{Stefan Steinerberger%
			\fnref{fn2} \corref{cor1}}
		\ead{steinerb@uw.edu}
		
		\fntext[fn1]{Program in Applied Mathematics, Yale University, New Haven, CT 06511, USA}
		\fntext[fn2]{Department of Mathematics, University of Washington, Seattle, WA 98195, USA }
		\cortext[cor1]{The work was funded by NSFDMS-1763179 and the Alfred P. Sloan Foundation. }
		

		
		\begin{abstract}
			We study the problem of exact support recovery: given an (unknown) vector $\myvec{\theta}^* \in \left\{-1,0,1\right\}^D$ with known sparsity $k = \|\myvec{\theta}^*\|_0$, we are given access to the noisy measurement 
			$$ \myvec{y} = \myvec{X}\myvec{\theta}^* + \myvec{\omega},$$
			where $\myvec{X} \in \mathbb{R}^{N \times D}$ is a (known) Gaussian matrix and the noise $\myvec{\omega} \in \mathbb{R}^N$ is an (unknown) Gaussian vector. How small can $N$ be for reliable recovery of the support of $\myvec{\theta}^*$? We present RAWLS (\textbf{R}andomly \textbf{A}ggregated un\textbf{W}eighted \textbf{L}east Squares \textbf{S}upport Recovery): the main idea is to take random subsets of the $N$ equations, perform least squares over this reduced bit of information, and average over many random subsets. We show that the proposed procedure can provably recover an approximation of $\myvec{\theta}^*$ and demonstrate its use through numerical examples. We use numerical simulations to demonstrate that the proposed procedure is beneficial for the task of support recovery. Finally, we observe that RAWLS is at par with several strong baselines in the low information regime (i.e. $N$ is small or $k$ is large).
		\end{abstract}

		\begin{keyword}
			Support Recovery, Compressed Sensing, Least Squares.
			
		\end{keyword}
		
	\end{frontmatter}

	\section{Introduction}

	\begin{figure}[h!]\label{example}
		\begin{center}
			\begin{tikzpicture}[scale=0.5]
				\node at (-7,0) {$\myvec{y}=$};
				\node at (0,0) {\includegraphics[scale=0.5]{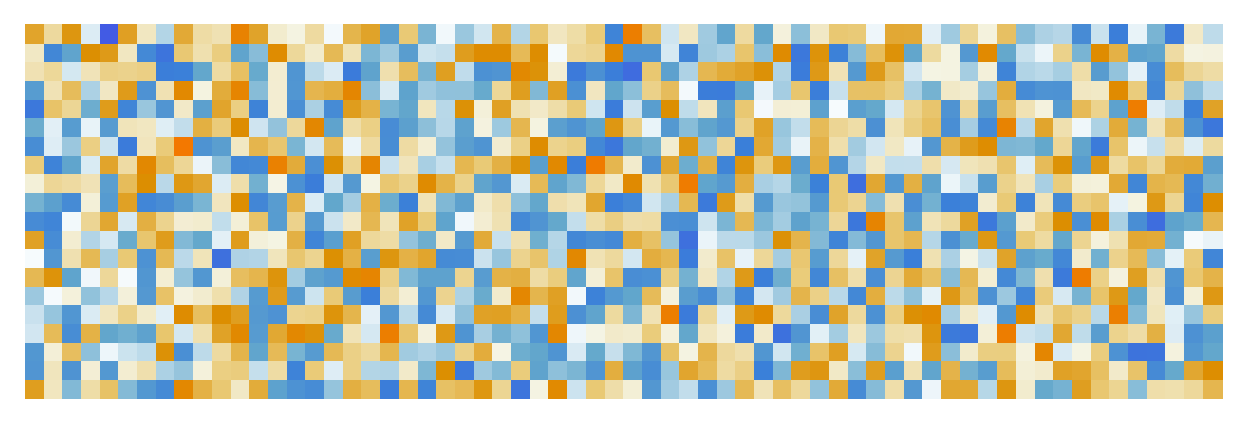}};
				\node at (7.35,0) {$\myvec{\theta }^*+ \myvec{w}$};
			\end{tikzpicture}
			\caption{We try to recover the support of $\myvec{\theta}^*$ from the observations $\myvec{X}$ and $\myvec{y}$, where $\myvec{y}=\myvec{X\theta}^* + \myvec{\omega}$. The (known) matrix $\myvec{X}$ is a Gaussian random matrix, so is the (unknown) noise $\myvec{\omega}$, we try to recover the support of $\myvec{\theta}^*$ with few measurements.}
		\end{center}
		
	\end{figure}

	The problem of support recovery, plays an important role in machine learning, signal processing, bioinformatics, and high dimensional statistics. In some applications, identifying the support leads to direct benefits such as reduction of memory and computational costs \cite{survey}, identification of cancer risk genes \cite{gene}. In other tasks, such as image denoising \cite{image}, the coefficients $\myvec{\theta}$ are of interest; based on the recovered support these could be estimated using least squares.\\
	
	In the regime $N<D$, the support recovery problem (illustrated in Fig. \ref{example}) is under-determined: we have fewer equations $N$ than variables $D$, and the observations are contaminated by additive noise $\myvec{\omega}$. In this setting sparsity is a useful assumption and it would be natural to estimate $\myvec{\theta}^*$ by minimizing
	$$
	\|\myvec{y}-\myvec{X}\myvec{\theta}\|^2_2 \quad \text{s.t.} \quad \|\myvec{\theta}\|_0 \leq k.
	$$  Since optimizing over this equation is intractable; several authors have replaced the $\ell^0$ norm by the $\ell^1$, which induces sparsity and leads to the well known Least Absolute Shrinkage and Selection Operator (LASSO) \cite{lasso}. {\rev We note that the sparsity properties of $\ell^p$ norms, for $p>1$ was studied in \cite{shen2018least}}. The LASSO, typically formulated using a regularized version of the problem, enjoys efficient optimization schemes \cite{lasso2, lasso3}. \cite{martin} showed that exact support recovery using the LASSO can occur with probability one if  $N > 2k \log(D-k) $. Several iterative methods for support recovery have been proposed, including:  Iterative Support Detection (ISD) \cite{isd}, the iteratively reweighted least squares (IRLS) \cite{irls} and the iteratively reweighted $\ell^1$ minimization (IRL1) \cite{irl1}.  
	The problem has also been addressed using greedy methods such as Orthogonal Matching Pursuit (OMP) \cite{omp}, Random OMP \cite{elad2009plurality} and other extensions \cite{stomp,cosamp}, or non convex schemes such as Trimmed LASSO (TL) \cite{tl} or smoothly clipped absolute deviation (SCAD) \cite{scad}. {\rev Recently, in \cite{shen2019exact}, the authors proposed a constrained matching pursuit algorithm for support recovery.}
	The importance of the problem has made it quite impossible to give an accurate, complete summary of the literature: we refer to the surveys  \cite{arjoune2017compressive,bruckstein2009sparse,marques2018review,mousavi2019survey}.
	
	{ \rev In this study, we propose RAWLS (\textbf{R}andomly \textbf{A}ggregated un\textbf{W}eighted \textbf{L}east Squares \textbf{S}upport Recovery), a simple scheme for support recovery from noisy measurements
		$$ \myvec{y} = \myvec{X}\myvec{\theta}^* + \myvec{\omega},$$
		where $\myvec{\theta}^* \in \left\{-1,0,1\right\}^D$ is a sparse vector. The ternary model for $\myvec{\theta}^*$ is motivated by several applications such as: compressing neural networks \cite{yan2016ternary,di2020compressing} or representing biological signals \cite{alemdar2017ternary}. RAWLS relies on subsampling the full set of equation and performing least squares on each subset. After averaging over the different solutions, we estimate the support using the most significant coefficients of the least squares solution. We prove a bound on the approximation of $\myvec{\theta}^*$ based on the proposed procedure. Finally, we demonstrate the applicability of RAWLS to the task of support recovery using different sparsity and noise levels. We observe, that our method outperforms several leading baselines in the low information regime.
		
		The paper is structured as follows. In Section \ref{sec:idea} we describe and motivate the proposed idea and our main results. Then, in Section \ref{sec:main} we provide a full description of RAWLS and demonstrate it efficacy using several examples. Finally, in Section \ref{sec:result} we prove our main result.}
	\section{The Idea and the Main Result}
	\label{sec:idea}
	\subsection{The Idea.}
	Our idea is quite simple: to estimate $\myvec{\theta}^*$, we will use least squares. This naive approach is a bad idea since 
	$$ \myvec{\widehat{\theta} }= \arg\min_{\myvec{\theta} \in \mathbb{R}^D} \| \myvec{X} \myvec{\theta} - \myvec{y}\|^2_2$$
	tends to require a fairly large number of queries $N$ to recover $\myvec{\theta}^*$ stably. The proposed scheme is based on the following observation: instead of running
	least squares on the full set of equations, we can use only a random subset of the equations. The underlying idea behind RAWLS 
	(\textbf{R}andomly \textbf{A}ggregated Un\textbf{w}eighted \textbf{L}east Squares \textbf{S}upport Recovery)\footnote{
		`The natural distribution is neither just nor unjust; nor is it unjust that persons are born into society at some particular position. These are simply natural facts. What is just and unjust is the way that institutions deal with these facts.' (John Rawls, 'A Theory of Justice' \cite{Rawls}).}
	is that none of the equations are distinguished: taking merely a subset of them amounts to a loss of information but provides a particularly unique point of view. However, since no particular subset of the equations is distinguished over any other subset, we average over a number of randomly selected subsets. Our analysis shows that this is indeed advantageous: while applying least squares using fewer equations leads to errors from the lack of information, these errors cancel (to some degree) when averaged.
	More precisely, let $A \subset \left\{1, \dots, N\right\}$,
	we define $\myvec{X}_A$ to be the restriction of $\myvec{X}$ onto the rows whose index is in the set $A$ and likewise for $\myvec{y}_A$. We then find
	\begin{equation}\label{eq:estimate}
		\myvec{\widehat{\theta}}_A = \arg\min_{\myvec{\theta} \in \mathbb{R}^D} \| \myvec{X}_A \myvec{\theta} - \myvec{y}_A\|^2_2.
	\end{equation} 
	We average this result over many subsets $(A_i)_{i=1}^{m}$ which we assume, for some fixed $n < \min(N,D)$, to be taken uniformly at random from all $n-$element subsets of $\left\{1, 2, \dots, N\right\}$ and use this as our estimate for a rescaling of $\myvec{\theta}^*$.  We hope that
	$$  \frac{1}{m} \sum_{i=1}^{m} \myvec{\widehat{\theta}}_{A_i} \sim  \frac{n}{D} \myvec{\theta}^*.$$

	\begin{figure}[h!] 
		\begin{center}
			\includegraphics[width=0.6\textwidth]{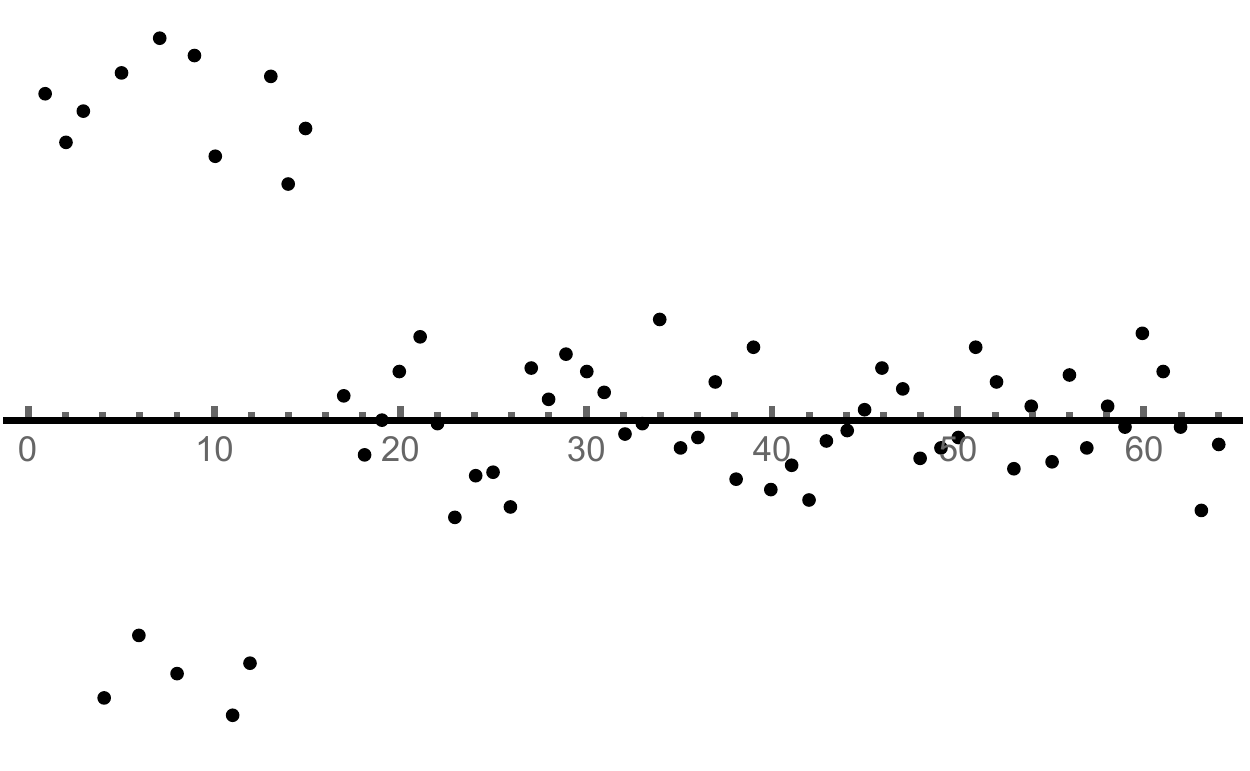}
			\caption{The reconstructed vector is much larger on the support of $\theta$ than off the support of $\theta$ and correctly identifies its sign.}
			\label{rec1}
		\end{center}
	\end{figure}

	An example (see Fig. \ref{rec1}) is as follows: let us define $\myvec{\theta} \in \mathbb{R}^{64}$ by setting the first $k=16$ entries to be $\pm 1$ (randomly) and the rest to be 0. We take a random Gaussian matrix $\myvec{X} \in \mathbb{R}^{64 \times 80}$, take subsets of size $n=58$ equations and average the least-square recovery over $m=100$ random choices of these $58$ equations.
	We observe that the reconstructed vector is much larger on the actual support than it is off the support; moreover, it correctly identifies the sign of the entry of $\myvec{\theta}$.

	\begin{figure}[h!] 
		\begin{center}
			\includegraphics[width=0.6\textwidth]{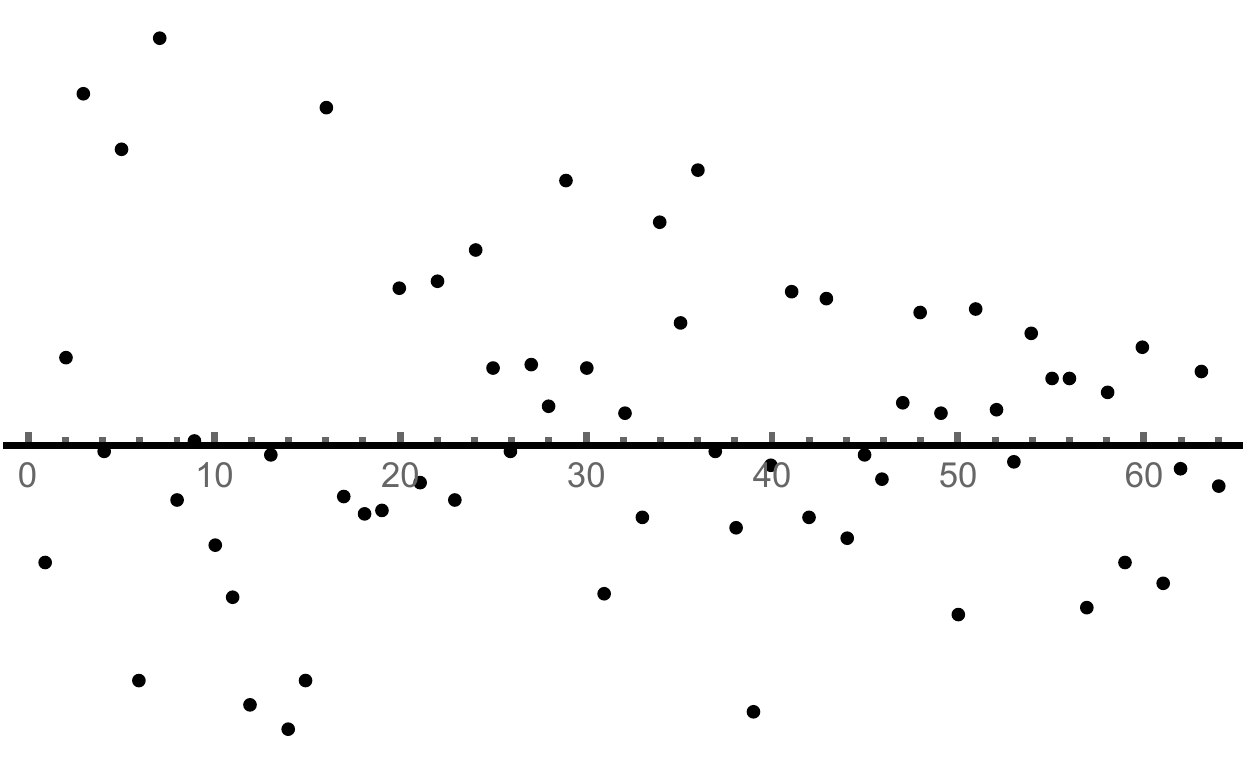}
			\caption{Reconstructing a noisy vector in $\mathbb{R}^D$, $D=64$ (supported on the first 16 coordinates) using $N=30$ equations (projected on $n=18-$dimensional subsets) with RAWLS.}
			\label{rec2}
		\end{center}
		
	\end{figure}

	Once we go down to a smaller number of equations $N$, something remarkable happens. For simplicity of exposition, we consider the same problem as above (reconstruction of a vector in $D=64$ dimensions) except now we only observe $N=30$ equations and we average over random subsets of these equations of size $n=18$. We emphasize that this quite the extreme setting; we are operating with very little information. This is reflected in the reconstructed vector (see Fig. \ref{rec2}): it is certainly not the case that the largest $k=16$ entries (by absolute value) correspond to the support of $\theta$. \textit{However}, what we observe in this setting is the largest entry is indeed located on the support of $\theta$: for this particular choice of parameters $(D,N,n) = (64,30, 18)$, this happens in $\sim90\%$ of all cases. This motivated our RAWLS-based peeling algorithm discussed in \S 3, where we iteratively remove the coordinate corresponding to the largest reconstructed vector. We note that correctly identifying the first coordinate is the most difficult task; after that we have reduced the problem by decreasing the size of the support, one less dimension $D \rightarrow D-1$, and the same number of equations $N$. This is an easier problem.

	\subsection{The Result} 
	We can show that this yields provably good results. Before formally stating the result, we will quickly outline its meaning. Instead of trying to recover the vector $\myvec{\theta}^*$, we will try to reconstruct its rescaled version $(n/D)\myvec{\theta}^*$ via an 
	$$ \mbox{average over random projections} \qquad  \frac{1}{m} \sum_{i=1}^{m} \pi_{A_i} \myvec{\theta}^*,$$
	where $\pi_{A_i}$ denotes the projection onto the subspace $A_i$ and the $A_i$ are, by an abuse of notation, subspaces of size $n$ chosen uniformly at random (subspaces spanned by the rows of $\myvec{X}$ indexed by $A_i$). However, we do not have access to $\myvec{\theta}^* \in \mathbb{R}^D$, we only have access to $\myvec{y} = \myvec{X} \myvec{\theta}^* + \myvec{\omega}$. Instead of taking a least squares projection of $\myvec{y}$, we will use the least squares projections of $\myvec{y}_{A_i}$ for random subsets of the equations in the hope that this approximately recovers $\myvec{\theta}^*$
	$$   \frac{1}{m} \sum_{i=1}^{m} \myvec{\widehat{\theta}}_{A_i} \sim  \frac{n}{D} \myvec{\theta}^*.$$

	\begin{thm} Let $\myvec{\theta}^* \in \mathbb{R}^D$ be an arbitrary vector. Then, by projecting onto subsets of $n < 0.9 \cdot D$ equations of the $N$ equations given by $\myvec{y} = \myvec{X}\myvec{\theta}^* + \myvec{\omega}$, we have
		$$ \mathbb{E}_{\textnormal{X},{\omega}}~ \left\|  \frac{1}{m} \sum_{i=1}^{m} \pi_{A_i} \myvec{\theta}^* - \frac{1}{m} \sum_{i=1}^{m}\myvec{\widehat{\theta}}_{A_i} \right\|_{\ell^2} \lesssim  \frac{n}{\sqrt{N} \sqrt{D-2}} + \frac{n}{D}.$$
	\end{thm}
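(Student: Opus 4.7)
The plan is to first exploit the explicit form of the minimum-norm least-squares solution. For $n<D$ and Gaussian $\myvec{X}$, the matrix $\myvec{X}_A$ has full row rank almost surely, so $\myvec{\widehat\theta}_A = \myvec{X}_A^\dagger \myvec{y}_A$ with $\myvec{X}_A^\dagger = \myvec{X}_A^T(\myvec{X}_A\myvec{X}_A^T)^{-1}$. Substituting $\myvec{y}_A = \myvec{X}_A\myvec{\theta}^* + \myvec{\omega}_A$ gives $\myvec{\widehat\theta}_A = \pi_A\myvec{\theta}^* + \myvec{X}_A^\dagger\myvec{\omega}_A$, where $\pi_A$ is the orthogonal projection onto the row span of $\myvec{X}_A$. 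The signal cancels in the difference $\pi_A\myvec{\theta}^* - \myvec{\widehat\theta}_A = -\myvec{X}_A^\dagger\myvec{\omega}_A$, so the theorem reduces to the $\myvec{\theta}^*$-free noise estimate
$$ \mathbb{E}_{\myvec{X},\myvec{\omega}}\left\|\frac{1}{m}\sum_{i=1}^m \myvec{X}_{A_i}^\dagger\myvec{\omega}_{A_i}\right\|_{\ell^2}. $$

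I would pass to the second moment via Jensen, expand the squared norm as $\frac{1}{m^2}\sum_{i,j}\langle\myvec{X}_{A_i}^\dagger\myvec{\omega}_{A_i}, \myvec{X}_{A_j}^\dagger\myvec{\omega}_{A_j}\rangle$, and condition on $\myvec{X}$. Since $\mathbb{E}[\omega_a\omega_b] = \delta_{ab}$, only indices in $A_i\cap A_j$ survive: writing $u_{A,k}$ for the column of $\myvec{X}_A^\dagger$ at position $k\in A$, each inner product collapses to $\sum_{k\in A_i\cap A_j}\langle u_{A_i,k}, u_{A_j,k}\rangle$. The technical core is then the identity
$$ \mathbb{E}_{\myvec{X}}\langle u_{A,k}, u_{A',k}\rangle \;=\; \frac{1}{D - |A\cap A'| - 1} \qquad (k\in A\cap A'). $$
To prove it, set $S = A\cap A'$ and $V_S = \mathrm{span}\{\myvec{X}_j : j\in S\}$. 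The projection $P_{V_S}u_{A,k}$ is forced by the constraints restricted to $S$ and hence equals $w_k := \myvec{X}_S^T(\myvec{X}_S\myvec{X}_S^T)^{-1}e_k$, the same vector for both $A$ and $A'$. The orthogonal components $(I-P_{V_S})u_{A,k}$ and $(I-P_{V_S})u_{A',k}$ depend on disjoint collections of additional Gaussian rows (indexed by $A\setminus S$ versus $A'\setminus S$), so they are conditionally independent given $\{\myvec{X}_j\}_{j\in S}$; moreover the rotational invariance of the standard Gaussian on $V_S^\perp$ forces each to have conditional mean zero. Hence only the parallel part survives and contributes $\mathbb{E}\|w_k\|^2 = \mathbb{E}[(\myvec{X}_S\myvec{X}_S^T)^{-1}]_{kk} = 1/(D-|S|-1)$ by the standard Wishart inverse-moment identity.

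Summing the identity over $(i,j)$ yields $\frac{1}{m^2}\sum_{i,j}\frac{|A_i\cap A_j|}{D-|A_i\cap A_j|-1}$. The off-diagonal part is controlled using that two uniformly random $n$-subsets of $\{1,\ldots,N\}$ overlap in $n^2/N$ in expectation, together with the constraint $n<0.9\cdot D$ which keeps the denominator of order $D$; this produces the $n^2/(N(D-2))$ contribution. The diagonal term $(i=j)$ and the slop from the crude denominator bound absorb into the $(n/D)^2$ contribution, and $\sqrt{a+b}\leq\sqrt a+\sqrt b$ delivers the stated two-term estimate. The main obstacle is the cross-subset identity above: the parallel--perpendicular decomposition with respect to $V_S$ combined with the rotational-invariance argument is precisely what avoids a head-on computation of $(\myvec{X}_A\myvec{X}_A^T)^{-1}\myvec{X}_A\myvec{X}_{A'}^T(\myvec{X}_{A'}\myvec{X}_{A'}^T)^{-1}$; the overlap combinatorics and the Wishart moment are by comparison routine.
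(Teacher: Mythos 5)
Your proposal is correct, and it takes a genuinely different route from the paper. The paper never writes down the pseudo-inverse: its Lemma 1 posits the \emph{ansatz} $\widehat{\myvec{\theta}}_A = \pi_A\myvec{\theta}^* + \sum_{a\in A}\frac{\myvec{g}_a}{\|\myvec{g}_a\|^2}\myvec{\omega}_a + \myvec{e}$ (treating the rows as an approximately orthogonal dual frame), bounds the residual $\myvec{e}$ via the frame inequality together with Silverstein's estimate $\sigma_{\min}\sim\sqrt{D}-\sqrt{n}$ --- which is where the $n/D$ term and the hypothesis $n<0.9D$ enter --- and then, in Lemma 2, averages the structured noise term over subsets using the law of large numbers (the $m\to\infty$ limit) and the inverse-$\chi^2$ moment $1/(D-2)$. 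You instead keep the exact identity $\widehat{\myvec{\theta}}_A-\pi_A\myvec{\theta}^*=\myvec{X}_A^\dagger\myvec{\omega}_A$ and do a clean second-moment computation; your cross-covariance identity $\mathbb{E}\langle u_{A,k},u_{A',k}\rangle=1/(D-|A\cap A'|-1)$, proved by splitting parallel/perpendicular to the span of the shared rows and using conditional independence plus a reflection-symmetry argument on $V_S^\perp$, is correct (I checked the reflection $Q=P_{V_S}-(I-P_{V_S})$ does map the min-norm solution to the min-norm solution of the reflected system), and it replaces both the frame-bound step and the LLN step. What your approach buys is a sharper conclusion: in the $m\to\infty$ regime the off-diagonal sum is $\lesssim n^2/(N(D-n-1))\lesssim n^2/(ND)$, so the $n/D$ term disappears entirely --- it is an artifact of the paper's approximate decomposition, not of the problem. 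What the paper's route buys is the explicit structural form of the distortion, which it reuses for intuition about the peeling heuristic. One small inaccuracy in your accounting: the diagonal term is $\frac{1}{m}\cdot\frac{n}{D-n-1}$, which does \emph{not} absorb into $(n/D)^2$ for small $m$ (e.g.\ $m=1$, $n\ll N$ violates the stated bound); you need $m\gtrsim N/n$ for it to be dominated by the off-diagonal contribution. This is a mild caveat rather than a gap, since the paper's own Lemma 2 is stated only as a $\lim_{\ell\to\infty}$ and so also proves the theorem only in the many-subsets regime.
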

	Several remarks are in order.
	\begin{enumerate}
		\item The statement is \textit{independent} of $\myvec{\theta}^*$. In particular, there is no underlying assumption about the structure of $\myvec{\theta}^*$ (and $\myvec{\theta}^*$ need not be sparse). We also observe that the size of $\myvec{\theta}^*$ does not appear on the right-hand side. This respects the problem setup where instead of $\myvec{X}\myvec{\theta}^*$ we are given the (additive) noisy version $\myvec{X} \myvec{\theta}^* + \myvec{\omega}$, where $\myvec{\omega}$ is a standard Gaussian vector ${\omega}_i \sim \mathcal{N}(0,1)$. 
		\item In the setting $n \leq N \ll D$ our analysis is accurate down to constants, both quantities on the right-hand side are asymptotically correct (in the sense of having the correct constant, 1, in front) if the scales separate more and more (see Figs. \ref{err1}, \ref{err2} and the Remark in \S 4.2).
		\item The randomness in the choice of the $A_i$ is not at all necessary. In fact, the proof suggests that one could simply pick completely deterministic subsets of the $N$ equations as long as none of the individual dimensions are featured too prominently and all are represented roughly an equal number of times in the projections. This is also substantiated by numerical evidence. This poses the question of whether there are `good' deterministic choices of subsets or whether there is a natural weight one could assign to the outcome resulting from each subset of equations (some `measure of reliability').
		\item The result suggests that using a smaller $n$ leads to a smaller error. However, it also leads to a smaller projection. We can compensate for that by inserting the appropriate scaling in our result from which we obtain
		$$ \mathbb{E}_{\textnormal{X},{\omega}}~ \left\|  \underbrace{ \frac{1}{m} \frac{D}{n} \sum_{i=1}^{m} \pi_{A_i} \myvec{\theta}^* }_{\approx \myvec{\theta}^*}- \frac{1}{m} \frac{D}{n} \sum_{i=1}^{m}\myvec{\widehat{\theta}}_{A_i} \right\|_{\ell^2} \lesssim  \frac{\sqrt{D}}{\sqrt{N}} + 1.$$
		This shows that there is some flexibility in the choice of $n$. In practice we have found that $n = 0.6\min(N,D)$ seems to be particularly suited (though not very different from, say, $n=0.5\min(N,D)$). The precise role of $n$ could be an interesting object for further study.
	\end{enumerate}
	
	We conclude by showing Theorem 1 in a simple example. As mentioned above, the terms in the upper bound (without the implicit constant and constant 1 instead) correspond to the sharp asymptotic limiting case where $n \leq N \ll D$. We show the case where $D=10000$, $N=100$ and $1 \leq n \leq 100$. For each value of $n$, we sample over $m=20$ random subsets of size $n$ of the $N$ equations.
	As for the vector $\theta$, it does not actually play a role, we chose it to be a Gaussian vector in $\mathbb{R}^D$. 
	We observe that the prediction is quite accurate (and the proof explains why this would be the case -- various quantities start concentrating tightly around their expectation).

	\begin{figure}[h!]
		\begin{center}
			\includegraphics[width=0.6\textwidth]{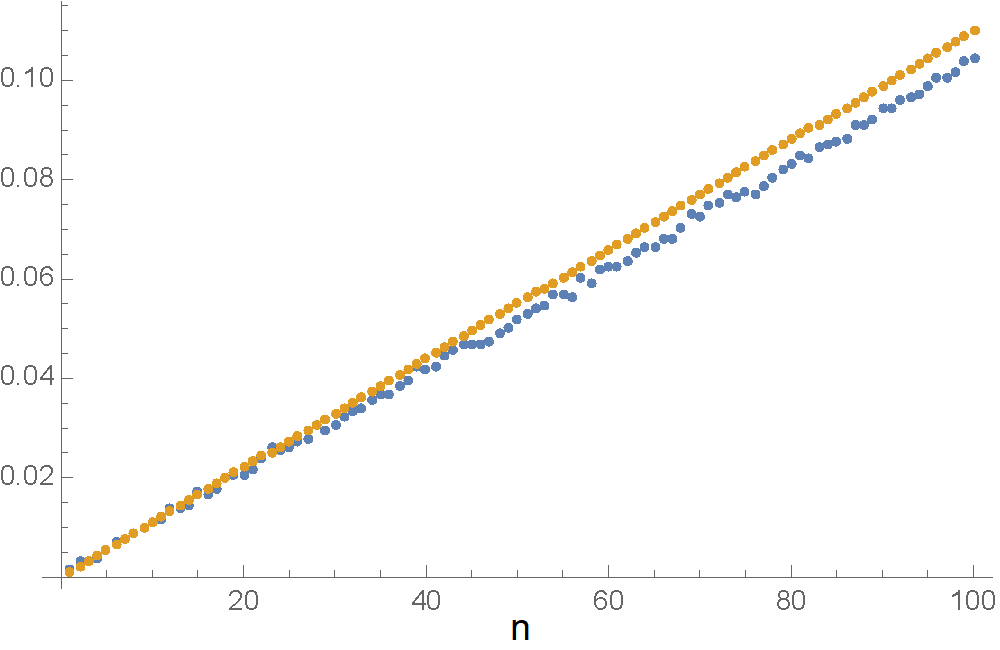}
			\caption{The error bounds in Theorem 1 (orange; ignoring the implicit constant) compared to the actual error (blue) for $D=10000$, $N=100$ and $1 \leq n \leq 100$.}
			\label{err1}
		\end{center}
		
	\end{figure}

	We also quickly illustrate that the restriction $n < 0.9 \cdot D$ is not just an artifact of the proof but, in fact, necessary (this also explains why RAWLS is better at recovering $\theta$ than an application of least squares to the full set of equations). We consider $\theta$ to be a unit vector (obtained from normalizing an instance of a Gaussian vector) in $D=200$ dimensions. We take $N=200$ equations and see what happens for $1 \leq n \leq 195$ (see Figure \ref{err2}).
	What we observe is that the theoretical error bound (with the implicit constant assumed to be 1) nicely dominates the error until $n$ starts getting very close to $D$ (we plot it for $1 \leq n \leq 195 < 200 = D$). We see that the error starts exceeding the size of the vector by many orders of magnitude. The proof will explain this as a degeneracy of the smallest singular value of a rectangular Gaussian matrix which becomes approximately square.

	\begin{figure}[h!] 
		\begin{center}
			\includegraphics[width=0.55\textwidth]{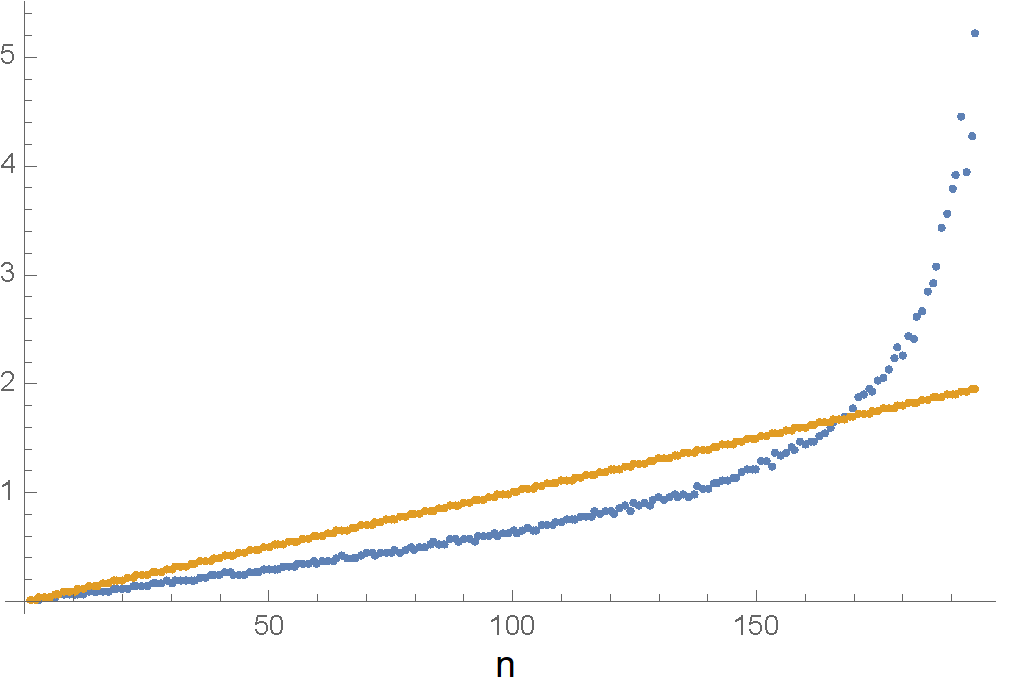}
			\caption{The error bounds in Theorem 1 (orange; ignoring the implicit constant) compared to the actual error (blue) for $D=200 =N$ and $1 \leq n \leq 195$.}
			\label{err2}
		\end{center}
	\end{figure}

	The error observed around $n=195 \sim N=D$ demonstrates why least squares using the full set of equations does not work; we obtain similar results also for $N \ll D$, the averaging has a natural stabilizing effect. We refer to the Remark in \S 4.2. for a prediction for what one would expect the error to look like when, say, $n = 0.99 D$.
	
	\subsection{Open Problems.}
	Theorem 1 raises a lot of open questions. 
	Is there a particularly natural choice of subspaces on which to project? We are investigating the case of random projections but the proof does not seem to require this; are there natural `adapted' subspaces that one can derive from a given matrix $X$? \\
	We conclude with a particularly interesting question. We recall that the random projections reduce the size of the resulting vector. We can compensate for that by inserting the appropriate scaling in our result from which we obtain
	$$ \mathbb{E}_{\textnormal{X},{\omega}}~ \left\|  \underbrace{ \frac{1}{m} \frac{D}{n} \sum_{i=1}^{m} \pi_{A_i} \theta }_{\approx \theta}- \frac{1}{m} \frac{D}{n} \sum_{i=1}^{m} x_{A_i}^* \right\|_{\ell^2} \lesssim  \frac{\sqrt{D}}{\sqrt{N}} + 1.$$
	In the case where we assume $\theta \in \left\{-1, 0, 1\right\}^D$, we want to make sure that we are properly able to distinguish two different vectors of that type and this can then be seen to require $N \sim D$ (not entirely surprising, we are not making any assumptions on the sparsity of $\theta$). However, a more refined approach is conceivable: ultimately, we are using the entries of our approximating vector to derive statements about the support. As such, the $\ell^2$ is perhaps not the only interesting quantity and estimates on $\ell^{\infty}$ would be quite desirable. In particular, what we observe in practice (and what motivated the peeling algorithm) is that very large entries (either very large or very small) in the recovered approximation is a good indicator for $\theta$ having support in that coordinate. This simple observations forms the basis of the algorithm discussed in \S 3. It would be interesting to have results in that direction.\\
	
	We also emphasize that the idea underlying RAWLS might have many other applications: it is ultimately an $\ell^2-$based concept and as such many natural variations seem conceivable. One such applications, a nonlinear variant that is shown to work particularly well in the support recovery problem, is discussed in the next section. A second question, outside the scope of this paper, is whether other methods used for support recovery could conceivably be merged with our philosophy: running it on random subsets of the equations and hoping that the averaging effects compensates for the loss of information.

	\section{Support Recovery with RAWLS}  \label{sec:main}
	
	\subsection{ The Idea.}
	\textit{If} it is indeed the case that
	$$  \frac{1}{m} \sum_{i=1}^{m} \pi_{A_i} \myvec{\theta}^* \approx \frac{n}{D}\myvec{\theta}^* + \mbox{some error}$$
	and \textit{if} the error is nicely random (as one usually expects in these cases), then the largest (or smallest) entries of the vector should be contained in the support of $\myvec{\theta}^*$. In a more elementary formulation, if we are given $\myvec{v} \in \left\{-1, 0,1\right\}^D$ (such that $\|\myvec{v}\|_{0}$ is not too small compared to $D$) and add a random Gaussian vector $\myvec{g}$ to it, then the largest (absolute) entry of $\myvec{v}+\myvec{g}$ will be attained (with high probability) on the support of $\myvec{v}$. This is a simple consequence of the rapid decay of the Gaussian, and motivates the Algorithm proposed in the following subsection.
	
	\subsection{Peeling with RAWLS}
	\begin{enumerate} 
		\item Compute the approximation 
		$$ \myvec{\widehat{\theta}} = \frac{1}{m} \sum_{i=1}^{m}\myvec{\widehat{\theta}}_{A_i},$$
		where $\myvec{\hat{\theta}}$ is estimated based on Eq. \ref{eq:estimate}.
		\item Find the element of largest absolute value of $\myvec{\widehat{\theta}}$. If this element $\widehat{\theta}_{\ell}$ is positive, then we assume that ${\theta}^*_{\ell}=1$; if negative, we assume ${\theta}^*_{\ell}=-1$.
		\item Remove the corresponding column from the matrix $\myvec{X}$ and update the right-hand side $\myvec{y}$ by subtracting the $\myvec{X}$ projected onto the estimated coordinate $\widehat{\theta}_{\ell}$.
		\item Return to (1) until $k$ non-zero entries of $\widehat{\myvec{\theta}}$ are estimated.
	\end{enumerate}

	This algorithm is thus a fairly simple greedy algorithm that identifies likely candidates for the support of $\myvec{\theta}^*$ by looking for particularly large entries in the RAWLS-reconstruction of $\myvec{\theta}^*$. We emphasize that for this type of iterative algorithm, each step is more difficult than the next one: having found a correct entry, the problem is reduced to a simpler problem $D \rightarrow D-1$ while maintaining the same amount of information $N \rightarrow N$. We point out that the method, just as other methods, should also be suitable for \textit{partial} recovery: finding a set of $k$ entries that has a large overlap with the ground truth, we do not pursue this here. We do not have any theoretical guarantees for the success rate of the peeling algorithm at this point and consider this to be an interesting problem. Perhaps the most interesting question at this stage is whether there are other implementations of these underlying ideas that can yield even better results.

	\subsection{Numerical Performance}
	In this section we support the effectiveness of RAWLS using numerical simulations. We focus on the task of exact support recovery using a random Gaussian design matrix $\myvec{X}$ with values drawn independently from $N(0,1)$ and random additive Gaussian noise $\myvec{\omega}$ with values drawn independently from $N(0,\sigma^2)$. As baselines, we compare the method to LASSO \cite{lasso}, IRL1 \cite{irl1}, TL \cite{tl}, OMP \cite{omp}, RandOMP \cite{elad2009plurality} and STG \cite{stg}. 
	To evaluate the probability of exact support recovery we run each method $100$ times and count the portion of successful estimations. A successful estimation of the support is counted if $S(\myvec{\theta})=S(\widehat{\myvec{\theta}})$, where $S(\myvec{\theta}):= \{i \in 1,...,D | \myvec{\theta}_i \neq 0 \}$. To improve the stability of LASSO, after each run we select the top $k$ coefficients of $\myvec{\theta}$ as the estimated support.  
	
	First, in Figure. \ref{fig:phase_tran} we present the probability of successful support recovery using $D=64$ variables, a fixed sparsity of $k=10$ and different number of measurements $N$. This example demonstrates that RAWLS can successfully recover the unknown support with fewer measurement compared with OMP,RandOMP and the LASSO.

	Next, we demonstrate how the sparsity level $k$ affects the success of RAWLS in recovering the support of $\myvec{\theta}^*$. We use $100$ simulations with design matrix $\myvec{X}$ and Gaussian noise $\myvec{\omega}$ defined as in the previous example and evaluate the performance of RAWLS for sparsity levels $k$ in $\{2,4,...,34\}$. In Figure \ref{fig:k_eval} demonstrate that RAWLS success rate is comparable to IRL1, STG and TL for large values of $k$. 
	
	{\rev We further evaluate the performance of RAWLS for other type of measurement matrices. Specifically, we generate a Bernoulli design matrix $\myvec{X}$ taking values $\{-1,1 \}$ with equal probability. We use additive Gaussian noise with zero mean and standard deviation of $\sigma=0.5$. Here, we also compare RAWLS to stochastic resonance OMP (SR\_OMP) \cite{simon2019mmse}. In Fig. \ref{fig:bin} we present the probability of support recovery using RAWLS and several other baselines. In this example, the sparsity level is $k=10$ with dimension $D=64$ and the results are based on $100$ simulations.
		
		Finally, we evaluate the performance of RAWLS in the regime of low information. Specifically, we use a vector with a sparsity level $k=30$, with $D=64$ variables, and focus on the regime of $30 \leq N \leq 90 $. Here, we restrict our comparison to the leading baselines, namely to IRL1 and TL. As observed in Fig. \ref{fig:hard}, RAWLS outperforms IRL1 and TL, in the regime of low information. Precisely, for $N\leq 55$, RALWS recovers the support with a higher probability compared with the competing methods.
	}

	\begin{figure}[tb!]
		
		\centering
		\includegraphics[width=0.85\textwidth]{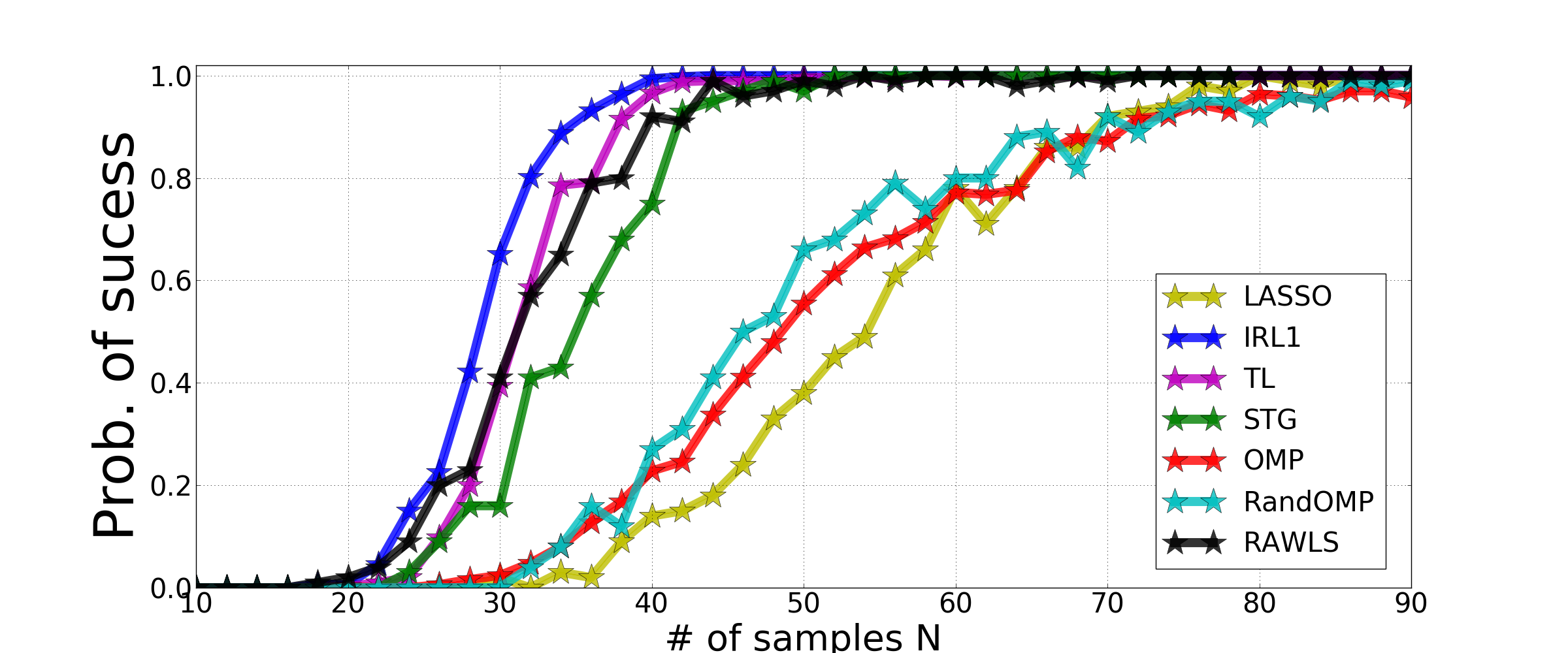}
		\includegraphics[width=0.85\textwidth]{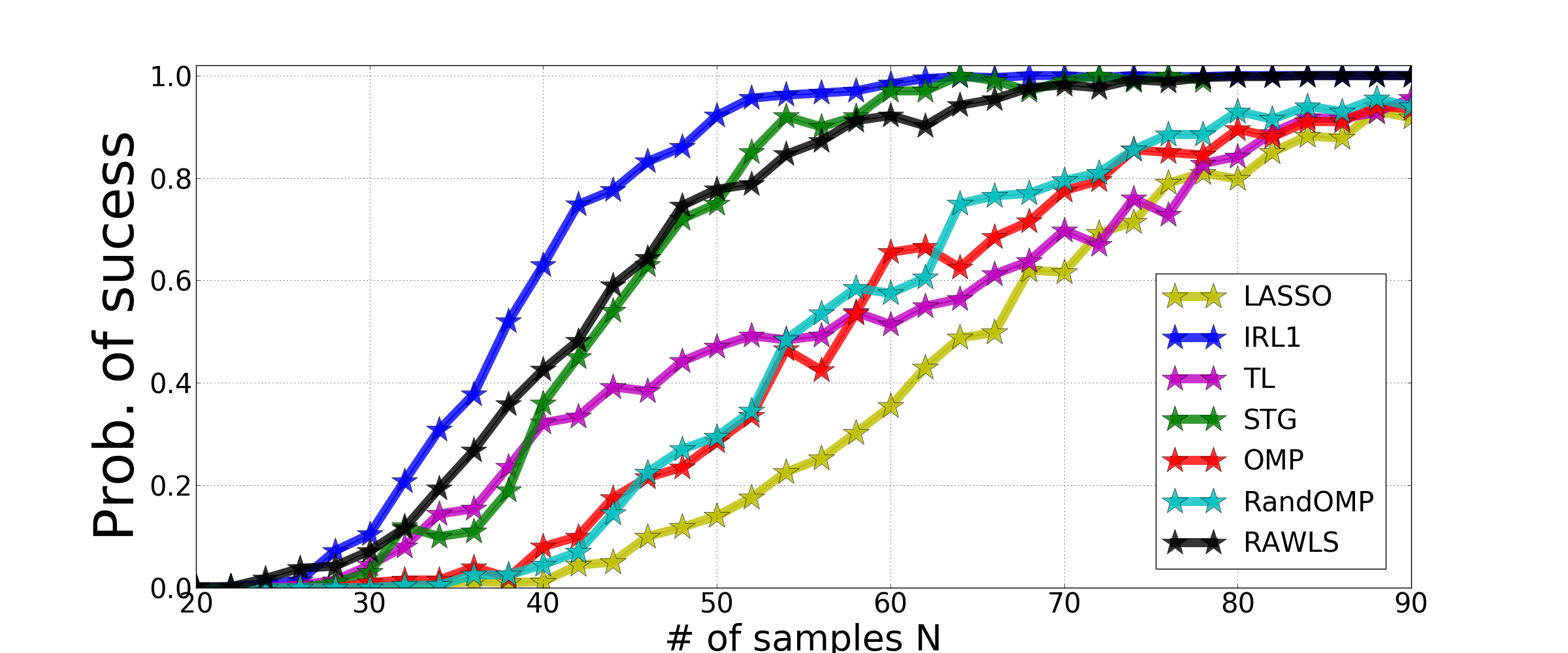}
		
		\caption{Numerical evaluation for the probability of exact support recovery vs. number of measurements $N$. We compare Peeling with RAWLS to several baselines for:  $\sigma=0.5$ (top panel) and $\sigma=1$ (bottom panel).}
		
		\label{fig:phase_tran}
	\end{figure}

	\begin{figure}[tb!]
		
		\centering
		\includegraphics[width=0.85\textwidth]{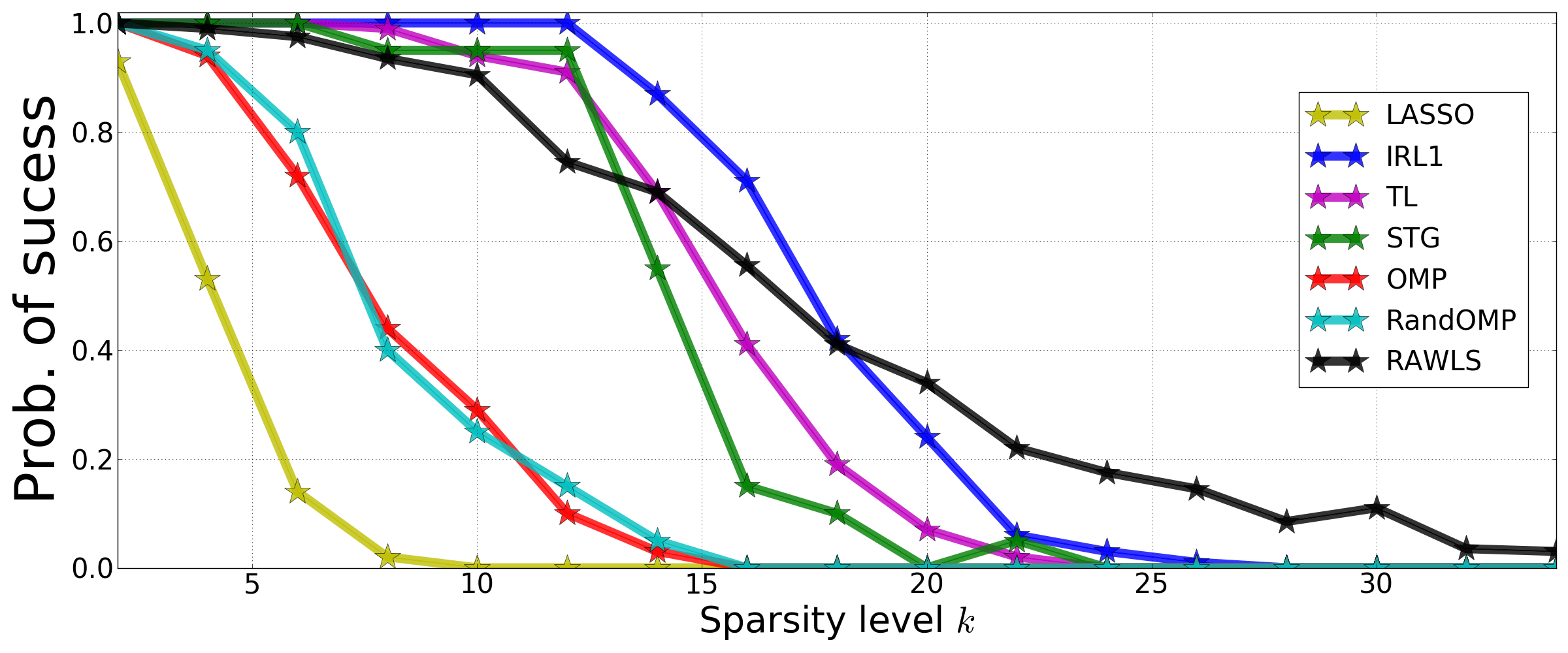}
		\includegraphics[width=0.85\textwidth]{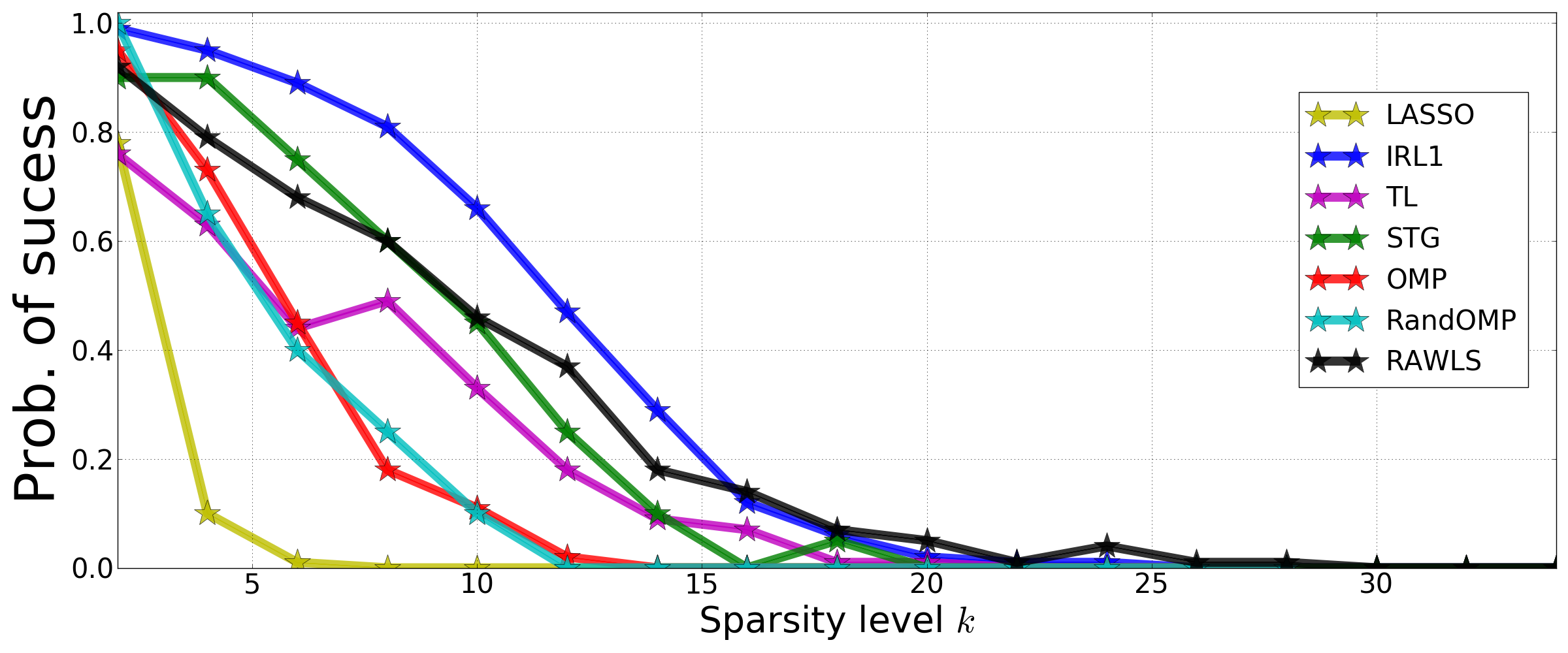}
		
		\caption{Numerical evaluation for the probability of exact support recovery vs. sparsity level $k$. Here the number of variables and measurements are fixed, specifically $D=64, \text{ and }N=40$. We compare Peeling with RAWLS to several baselines for:  $\sigma=0.5$ (top panel) and $\sigma=1$ (bottom panel).}
		
		\label{fig:k_eval}
	\end{figure}

	\begin{figure}[tb!]
		
		\centering
		\includegraphics[width=0.85\textwidth]{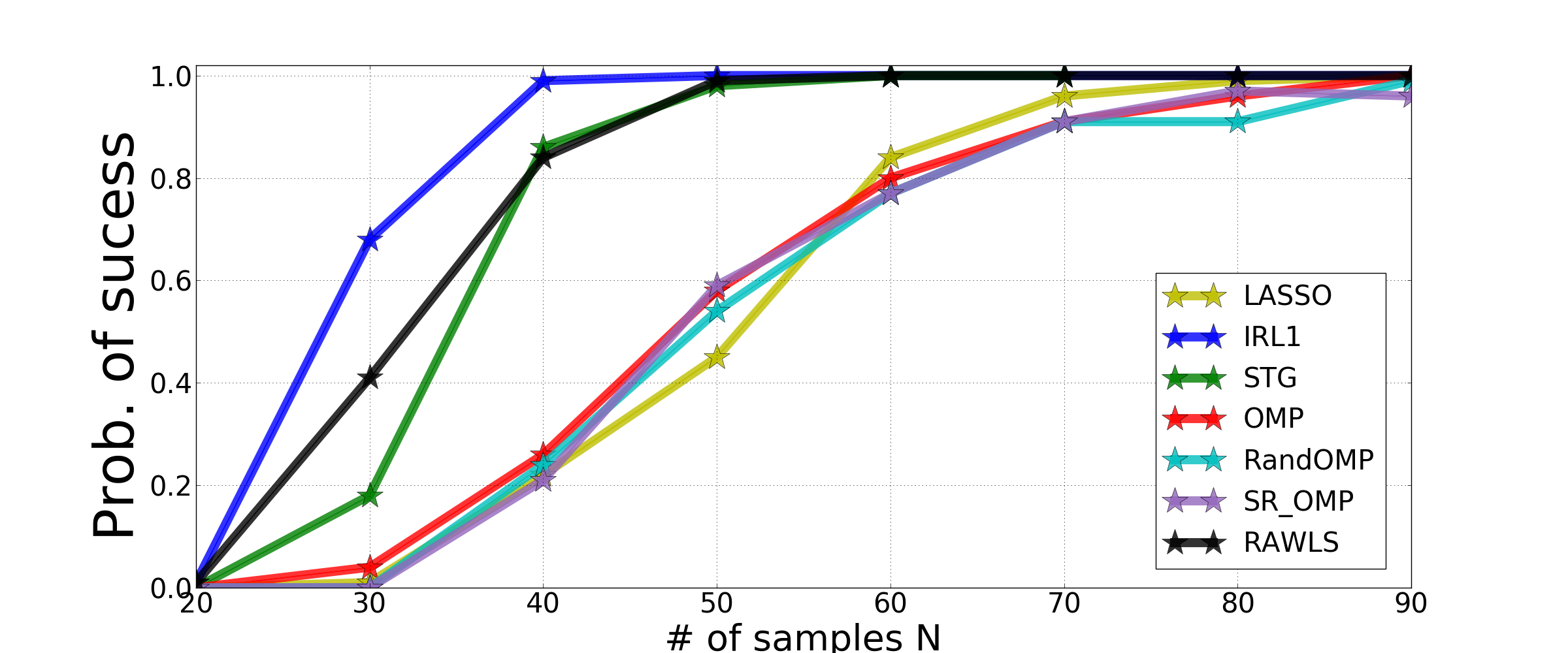}
		\caption{Numerical evaluation for the probability of exact support recovery vs. number of measurements $N$. Here the number of variables and sparsity are fixed, specifically $D=64, \text{ and }k=10$. We compare Peeling with RAWLS to several baselines for a binary design matrix with values drawn from a fair Bernoulli distribution. The additive noise is Gaussian with zero mean and standard deviation $\sigma=0.5$}
		
		\label{fig:bin}
	\end{figure}

	\begin{figure}[tb!]
		
		\centering
		\includegraphics[width=0.85\textwidth]{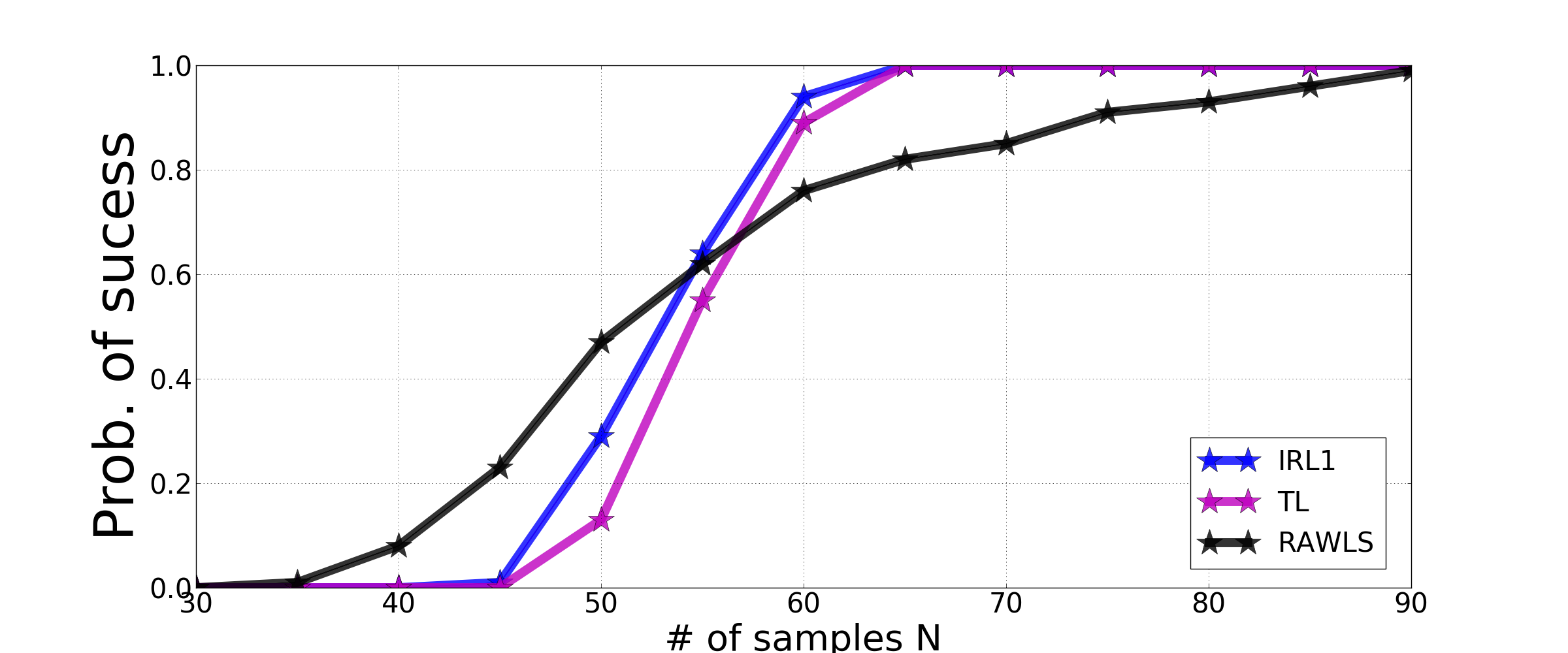}
		\caption{Numerical evaluation for the probability of exact support recovery vs. number of measurements $N$. Here the number of variables and sparsity are fixed, specifically $D=64, \text{ and }k=30$. The additive noise is Gaussian with zero mean and standard deviation $\sigma=0.5$. RAWLS outperforms state of the art method in the low information regime.}
		
		\label{fig:hard}
	\end{figure}

	\section{Proof of the Theorem} \label{sec:result}
	\subsection{Setup.} 
	Let $\myvec{\theta}^* \in \left\{-1,0,1\right\}^D$ be a sparse vector with support $\|\myvec{\theta}^*\|_0 = k$ and let $\myvec{X} \in \mathbb{R}^{N \times D}$ be a design matrix
	all of whose entries are i.i.d. random variables drawn from $\mathcal{N}(0,1)$. We will also use the notation $g=(\myvec{g}_i)_{i=1}^{N}$ to denote
	the Gaussian vectors in $\mathbb{R}^D$ dimensions that are forming the rows. We are given
	$$ \myvec{y} = \myvec{X} \myvec{\theta}^* + \myvec{\omega},$$
	where each entry of $\myvec{\omega}$ is i.i.d. normally distributed $\myvec{\omega}_i \sim \mathcal{N}(0,1)$. We try to understand how our algorithm performs
	on this data. Let $A \subset \left\{1, \dots, N\right\}$ be a random subset of size $|A| = n$. We are trying to understand the least squares
	solution in Eq. \ref{eq:estimate}
	where $\myvec{X}_A$ denotes the restrictions onto the rows of $\myvec{X}$ indexed by $A$ and likewise for $\myvec{y}_{A}$. If $n \leq D$, then the system has more variables than equations and always has a solution: we are interested in the solution with the smallest $\ell^2-$norm and will denote it by $\myvec{\widehat{\theta}}_{A}$.

	\subsection{A Single Projection.}
	The purpose of this statement is to provide the analysis of a single projection onto a random subspace spanned by a random subset of the rows. The main insight is that this projection can be approximately deconstructed into the projection of the ground truth, a highly structured Gaussian error on top of that and a relatively small error term.
	\begin{lemma} Let $\myvec{\theta}^* \in \mathbb{R}^D$ be fixed, let $\myvec{X} \in \mathbb{R}^{N \times D}$ be a random Gaussian matrix and let $A \subset \left\{1, 2, \dots, N\right\}$ be a randomly chosen subset of size $|A|=n < 0.9 \cdot D$. Then the orthogonal (noisy) projection of $\myvec{\theta}^*$ onto the subspace spanned by the rows indexed by $A$ (given by $\myvec{y}=\myvec{X}\myvec{\theta}^* + \myvec{\omega}$) satisfies
		$$ \widehat{\myvec{\theta}}_A = \pi_{A}\myvec{\theta}^* + \left(\sum_{a \in A}^{}{ \frac{\myvec{g}_a}{\|\myvec{g}_a\|^2} \myvec{\omega}_a}\right) + \myvec{e},$$
		{\rev where $g_a$ is the $a$-th row of the matrix $\myvec{X}$}, and $\myvec{e}$ satisfies, with high likelihood,
		$ \mathbb{E}_{\textnormal{X},{\omega}}~ \|\myvec{e}\| \lesssim \frac{n}{D}.$
	\end{lemma}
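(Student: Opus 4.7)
The plan is to start from the closed form of the minimum-$\ell^2$-norm least squares solution. Since $n < D$, the matrix $\myvec{X}_A$ has full row rank almost surely, so
\[ \widehat{\myvec{\theta}}_A = \myvec{X}_A^T (\myvec{X}_A \myvec{X}_A^T)^{-1} \myvec{y}_A. \]
Substituting $\myvec{y}_A = \myvec{X}_A \myvec{\theta}^* + \myvec{\omega}_A$ disposes of the signal contribution at once: the matrix $\myvec{X}_A^T (\myvec{X}_A \myvec{X}_A^T)^{-1} \myvec{X}_A$ is precisely the orthogonal projector $\pi_A$ onto the row span of $\myvec{X}_A$. All the work then lies in analysing the noise term $\myvec{X}_A^T G^{-1} \myvec{\omega}_A$, where I abbreviate $G := \myvec{X}_A \myvec{X}_A^T$.

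In high dimension the Gaussian rows $\myvec{g}_a$ are nearly orthogonal, so $G$ is dominated by its diagonal $\mathcal{D} := \mathrm{diag}(\|\myvec{g}_a\|^2)_{a \in A}$, whose entries concentrate around $D$. Writing $G = \mathcal{D} + R$ with $R$ carrying the off-diagonal products $\myvec{g}_a \cdot \myvec{g}_b$ ($a \neq b$), the identity $G^{-1} = \mathcal{D}^{-1} - \mathcal{D}^{-1} R\, G^{-1}$ splits the noise contribution as $\myvec{X}_A^T \mathcal{D}^{-1} \myvec{\omega}_A + \myvec{e}$, with $\myvec{e} := -\myvec{X}_A^T \mathcal{D}^{-1} R\, G^{-1} \myvec{\omega}_A$. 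The leading piece evaluates to exactly $\sum_{a \in A} (\myvec{g}_a / \|\myvec{g}_a\|^2) \myvec{\omega}_a$, so the structured Gaussian claimed in the lemma drops out automatically, and only the bound $\mathbb{E}\|\myvec{e}\| \lesssim n/D$ remains.

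To obtain that bound I would combine five operator/Euclidean norm estimates on a single high-probability event: $\|\myvec{X}_A\|_{\mathrm{op}} \lesssim \sqrt{D}+\sqrt{n}$ and $\|G^{-1}\|_{\mathrm{op}} \lesssim (\sqrt{D}-\sqrt{n})^{-2} \lesssim 1/D$ from the Bai--Yin extremal singular-value bounds for Gaussian matrices; $\|\mathcal{D}^{-1}\|_{\mathrm{op}} \lesssim 1/D$ from $\chi^2$ concentration with a union bound over $a \in A$; $\|R\|_{\mathrm{op}} \lesssim \sqrt{Dn}$ by writing $R = (G - D\, I_n) - (\mathcal{D} - D\, I_n)$ and bounding each piece via the same inputs; and $\|\myvec{\omega}_A\| \lesssim \sqrt{n}$ from $\chi^2$ concentration. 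Multiplying gives
\[ \|\myvec{e}\| \;\lesssim\; (\sqrt{D}+\sqrt{n}) \cdot \tfrac{1}{D} \cdot \sqrt{Dn} \cdot \tfrac{1}{D} \cdot \sqrt{n} \;\lesssim\; \frac{n}{D}. \]
The hypothesis $n < 0.9 \cdot D$ enters only in the bound on $G^{-1}$, keeping the smallest singular value of $\myvec{X}_A$ bounded away from zero; this is exactly the degeneracy that Figure~\ref{err2} illustrates as $n \to D$.

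The main obstacle is less any individual ingredient than cleanly upgrading the five high-probability bounds to the stated expectation: one truncates onto the event where all estimates hold simultaneously and absorbs the complementary tail using $\chi^2$ moments together with moment bounds on Gaussian singular values. A minor subtlety is that the bounds on $G^{-1}$ and on $R$ both involve the same matrix $G$ and are therefore genuinely correlated; since both are handled via worst-case operator norms rather than an independence argument, the product bound above still goes through without any delicate cancellation.
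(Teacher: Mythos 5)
Your argument is sound, and it in fact lands on the same decomposition as the paper: the leading term $\sum_{a\in A}\frac{\myvec{g}_a}{\|\myvec{g}_a\|^2}\myvec{\omega}_a$ is precisely the paper's ``almost orthogonal basis'' ansatz, and your $\myvec{e}=-\myvec{X}_A^T\mathcal{D}^{-1}R\,G^{-1}\myvec{\omega}_A$ is the same error vector the paper characterizes implicitly. Where you diverge is in how that error is bounded. The paper never writes the pseudoinverse; it pins down $\myvec{e}$ through the $n$ equations $\langle \myvec{g}_a,\myvec{e}\rangle=-\sum_{i\neq a}\frac{\langle\myvec{g}_a,\myvec{g}_i\rangle}{\|\myvec{g}_i\|^2}\myvec{\omega}_i$ (i.e.\ $\myvec{X}_A\myvec{e}=-R\mathcal{D}^{-1}\myvec{\omega}_A$ in your notation), computes $\mathbb{E}\sum_{a\in A}|\langle\myvec{g}_a,\myvec{e}\rangle|^2$ \emph{exactly} as $n(n-1)/(D-2)$ using the independence of $\myvec{g}_i/\|\myvec{g}_i\|$ from $\|\myvec{g}_i\|$ and the inverse-$\chi^2$ mean, and then converts this to $\|\myvec{e}\|$ with a single use of the frame inequality $\sigma_{\min}(\myvec{X}_A)^2\|\myvec{e}\|^2\le\sum_a|\langle\myvec{g}_a,\myvec{e}\rangle|^2$ together with $\sigma_{\min}\sim\sqrt{D}-\sqrt{n}$. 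Your chain of five operator-norm bounds gives the same order $n/D$ — it is not lossy here because $\|R\|_{\mathrm{op}}\sqrt{n}\sim\|R\|_F$ — and it is more mechanical and arguably easier to make fully rigorous on a single high-probability event; what it gives up is the sharp constant, since the paper's exact second-moment computation is what supports the claim (and Figures~\ref{err1}, \ref{err2}) that the estimate is asymptotically tight with constant $1$ when $n\le N\ll D$. Both arguments share the same soft spot, namely upgrading high-probability singular-value control to a statement in expectation (the paper is in fact looser than you on this point, invoking Silverstein's limiting result without any truncation), and both use $n<0.9\cdot D$ for exactly the same purpose: keeping $\sigma_{\min}(\myvec{X}_A)$, equivalently $\|G^{-1}\|_{\mathrm{op}}^{-1/2}$, of order $\sqrt{D}$.
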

	The purpose of this Lemma is to show that the (noisy) projection of $\myvec{\theta}^*$ onto a random subspace (this is one interpretation of $\myvec{y} = \myvec{X} \myvec{\theta}^* + \myvec{\omega}$) leads to substantial distortions; however, these distortions are \textit{not} arbitrary and follow a fairly regular pattern up to a small error.
	The second term is not necessarily that small; however, its form will allow us to show that averaging it over multiple subspaces will further decrease the size. We emphasize that in the case $n \ll D$ our estimate is sharp and we expect $\|\myvec{e}\| \sim n/D$ with tight concentration and a small error (this could be made precise when $D/n$ becomes large).
	
	The proof makes use of the following basic fact in linear algebra that we recall for the convenience of the reader: let $(\myvec{g}_a)_{a=1}^{n}$ be $n$ vectors in $\mathbb{R}^D$ with $D > n$ and let $\myvec{v} \in \spasn\left\{\myvec{g}_1, \dots, \myvec{g}_n\right\}$. Then
	$$\sigma_{\min}(\myvec{G})^2 \|\myvec{v}\|^2 \leq \sum_{a=1}^{n}{ \left| \left\langle \myvec{g}_a, \myvec{v} \right\rangle \right|^2} \leq \sigma_{\max}(\myvec{G})^2 \|\myvec{v}\|^2,$$
	where $\sigma$ denotes the singular values of the matrix $\myvec{G}$ obtained by collecting $\{\myvec{g}_a\}^n_{a=1}$ as column vectors (or, alternatively, the largest and smallest eigenvectors of $\myvec{G}^T \myvec{G}$). This follows easily from observing that
	$$  \sum_{a=1}^{n}{ \left| \left\langle \myvec{g}_a, \myvec{v} \right\rangle \right|^2} = \|\myvec{G}^T \myvec{v}\|^2.$$
	This is well-known in frame theory: the frame constants for finite-dimensional problems are given by the singular values of the associated matrix.
	
	\begin{proof}[Proof of the Lemma] We will use $\widehat{\myvec{\theta}}_A$ to denote the $\ell^2-$smallest vector satisfying Eq. \ref{eq:estimate}.
		This solutions is contained in the vector space $ V = \spasn_{} \left\{\myvec{g}_a: a \in A \right\}$ (if $\myvec{\theta}$ had a component that was orthogonal to these rows, then it would not have any effect in the matrix multiplication $\myvec{X}_A \myvec{\theta}$ and removing that component would result in a smaller $\ell^2-$norm). Since the number of variables, $D$, is larger than the number of equations, $n$, and $\myvec{X}$ is Gaussian we know that the minimum is 0 with likelihood 1. Thus $\myvec{X}_A \widehat{\myvec{\theta}}_A= \myvec{y}_A = \myvec{X}_A \myvec{\theta}^* + \myvec{\omega}_A$. We will analyze this equation for a single row. For any $a \in A$,
		\begin{align*}
			\left\langle \myvec{g}_a, \widehat{\myvec{\theta}}_A \right\rangle = \left\langle \myvec{g}_a, \myvec{\theta}^* \right\rangle + \myvec{\omega}_a = \left\langle \myvec{g}_a, \myvec{\theta}^* + \frac{\myvec{\omega}_a}{\|\myvec{g}_a\|^2}\myvec{g}_a\right\rangle.
		\end{align*}
		We will use this equation for all $a \in A$. By definition of the orthogonal projection, we have, for all $a \in A$,
		$ \left\langle \myvec{g}_a, \myvec{\theta}^* \right\rangle = \left\langle \myvec{g}_a, \pi_A \myvec{\theta}^* \right\rangle,$
		and thus the identity
		\begin{equation}\label{eq:identity}
			\left\langle \myvec{g}_a, \widehat{\myvec{\theta}}_A \right\rangle =  \left\langle \myvec{g}_a, \pi_A \myvec{\theta}^* + \frac{\myvec{\omega}_a}{\|\myvec{g}_a\|^2}\myvec{g}_a\right\rangle.\end{equation}
		This is an interesting way of interpreting the introduction of additive noise: the error that we are given makes it seem as if the inner product was not with $\pi_A \myvec{\theta}^*$ but instead with $\pi_A \myvec{\theta}^*$ and a small additional multiple of $\myvec{g}_{a}$.
		In practice, if $n \ll D$, then the Gaussian vectors are ``almost'' orthogonal and ``almost'' form an orthogonal basis of the space that they span. This motivates the \textit{ansatz}
		$$ \widehat{\myvec{\theta}}_A = \pi_{A}\myvec{\theta}^* + \left(\sum_{a \in A}^{}{ \frac{\myvec{g}_a}{\|\myvec{g}_a\|^2} \myvec{\omega}_a }\right) + \myvec{e},$$
		where $\pi_A$ is the orthogonal projection onto the vector space
		$ V = \spasn_{} \left\{\myvec{g}_a: a \in A \right\}$
		and
		$\myvec{e} \in \mathbb{R}^D$ is an error term whose size we try to investigate. 
		We plug in our ansatz in to Eq. \ref{eq:identity} and obtain, for all $a \in A$,
		$$ \left\langle \myvec{g}_a, \myvec{e} \right\rangle = - \left\langle \myvec{g}_a,  \sum_{ a  \neq i \in A}^{}{ \frac{\myvec{g}_i}{\|\myvec{g}_i\|^2} \myvec{\omega}_i } \right\rangle.$$
		We emphasize that, since the $\myvec{g}_a$ span $V$ with probability 1, these $n$ equations uniquely identify $\myvec{e} \in V$ with probability 1.
		We first try to understand the quantity on the right-hand side. We have
		$$ \left\langle \myvec{g}_a,  \sum_{ a \neq i \in A}^{}{ \frac{\myvec{g}_i}{\|\myvec{g}_i\|^2} \myvec{\omega}_i } \right\rangle = \sum_{ a \neq i \in A}^{} \frac{\left\langle \myvec{g}_i, \myvec{g}_a \right\rangle}{\|\myvec{g}_i\|^2} \myvec{\omega}_i.$$
		The inner product of two random Gaussians is a random variable at scale $\left\langle \myvec{g}_i, \myvec{g}_a \right\rangle \sim \sqrt{D}$, the size of an individual Gaussian vector is at scale $\mathbb{E} \|\myvec{g}_i\|^2 \sim D + \mathcal{O}(\sqrt{D})$ with high likelihood. The $\myvec{\omega}_i \sim \mathcal{N}(0,1)$ have an additional randomization effect. The sum runs over $n-1$ elements. Altogether, we expect the quantity to be a random variable at scale
		$$ \left|  \sum_{ a \neq i \in A}^{} \frac{\left\langle \myvec{g}_i, \myvec{g}_a \right\rangle}{\|\myvec{g}_i\|^2} \myvec{\omega}_i \right|  \sim \frac{\sqrt{n}}{\sqrt{D}}.$$
		An explicit computation shows that
		\begin{align*}
			\mathbb{E} \left|  \sum_{ a \neq i \in A}^{} \frac{\left\langle \myvec{g}_i, \myvec{g}_a \right\rangle}{\|\myvec{g}_i\|^2} \myvec{\omega}_i \right|^2 = \mathbb{E} \sum_{ a \neq i \in A}^{} \frac{\left\langle \myvec{g}_i, \myvec{g}_a \right\rangle^2}{\|\myvec{g}_i\|^4} \myvec{\omega}_i^2 \\
			+ \mathbb{E} \sum_{ a \neq i_1 \neq i_2 \in A}^{} \frac{\left\langle \myvec{g}_{i_1}, \myvec{g}_a \right\rangle}{\|\myvec{g}_{i_1}\|^2}  \frac{\left\langle \myvec{g}_{i_2}, \myvec{g}_a \right\rangle}{\|\myvec{g}_{i_2}\|^2} \myvec{\omega}_{i_1} \myvec{\omega}_{i_2}.
		\end{align*}
		The second expectation is clearly 0 since $\myvec{\omega}_i \sim \mathcal{N}(0,1)$ and these are independent of each other.
		It remains to evaluate the first expectation. 
		{\rev Since $\myvec{\omega}_i$ are independent of $\myvec{g}_i$, we get}
		$$  \mathbb{E}_{{\textnormal{X},{\omega}}} \sum_{ a \neq i \in A}^{} \frac{\left\langle \myvec{g}_i, \myvec{g}_a \right\rangle^2}{\|\myvec{g}_i\|^4} \myvec{\omega}_i^2 = 
		\mathbb{E}_{\textnormal{X}} \sum_{ a \neq i \in A}^{} \frac{\left\langle \myvec{g}_i, \myvec{g}_a \right\rangle^2}{\|\myvec{g}_i\|^4}.$$
		This sum can be decoupled into two parts
		$$  \mathbb{E}_{{\textnormal{X}}} \sum_{ a \neq i \in A}^{} \frac{\left\langle \myvec{g}_i, \myvec{g}_a \right\rangle^2}{\|\myvec{g}_i\|^4} =  \mathbb{E}_{{\textnormal{X}}} \sum_{ a \neq i \in A}^{} \left\langle \frac{\myvec{g}_i}{\|\myvec{g}_i\|}, \myvec{g}_a \right\rangle^2 \frac{1}{\|\myvec{g}_i\|^2}.$$
		We observe that $\myvec{g}_i/\|\myvec{g}_i\|$ is a random vector on the unit sphere (this follows from the rotational symmetry of Gaussian vectors); as such, it is completely independent of its length $\|\myvec{g}_i\|$ allowing us to treat both quantities as independent random variables. However, the first term is simply an inner product of a Gaussian vector against a unit length vector, thus
		$$\left\langle  \frac{\myvec{g}_i}{\|\myvec{g}_i\|}, \myvec{g}_a \right\rangle  \mbox{ is a Gaussian variable and  }  \mathbb{E} \left\langle  \frac{\myvec{g}_i}{\|\myvec{g}_i\|}, \myvec{g}_a \right\rangle^2 = 1.$$
		The remaining quantity is the mean of an inverse $\chi-$distribution which is $1/(D-2)$ for $D \geq 3$ and thus
		$$  \mathbb{E}_{X} \sum_{ a \neq i \in A}^{} \frac{\left\langle \myvec{g}_i, \myvec{g}_a \right\rangle^2}{\|\myvec{g}_i\|^4} =  \mathbb{E}_{X} \sum_{ a \neq i \in A}^{}  \frac{1}{\|\myvec{g}_i\|^2} = \frac{n-1}{D-2} \lesssim \frac{n}{D}.$$
		Summing up, we obtain
		$ \mathbb{E} \sum_{a \in A}  \left|\left\langle \myvec{g}_a, \myvec{e} \right\rangle \right|^2 \lesssim \frac{n^2}{D}.$
		However, since $e \in \spasn\left\{\myvec{g}_a: a \in A\right\}$, we have the basic inequality
		$$ \sigma_{\min}^2 \|\myvec{e}\|^2 \leq  \sum_{a \in A}  \left|\left\langle \myvec{g}_a, \myvec{e} \right\rangle \right|^2 \leq \sigma_{\max}^2 \|\myvec{e}\|^2.$$
		The smallest singular value of a random rectangular Gaussian matrix was determined by Silverstein \cite{silver} who showed
		that we can expect, in the limit, that
		$ \sigma_{\min} \sim  \sqrt{D} - \sqrt{n}.$
		Combining all these results shows that we expect, in the regime where $d$ has a bounded gap from $D$, say $n \leq 0.9 \cdot D$, that
		$$ \|\myvec{e}\| \lesssim_{} \frac{n}{D}.$$
	\end{proof}
	
	\textit{Remark.} We observe that the first part of the argument is fairly tight, in particular, we expect 
	$$\mathbb{E} \sum_{a \in A}  \left|\left\langle \myvec{g}_a, \myvec{e}\right\rangle \right|^2 \sim \frac{n^2}{D}$$
	with tight concentration. The second part of the argument is not precise down to constants but it becomes tight if we have $n \ll D$. 
	We observe that if $n \ll D$, then we actually have $\sigma_{\min} \sim \sigma_{\max}$ since the singular are expected to be in the interval $[\sqrt{D}-\sqrt{n}, \sqrt{D} + \sqrt{n}]$. Since all the estimates we carried out are actually quite tightly concentrated, we thus expect, with a fair degree of accuracy,
	$$ \|\myvec{e}\| \sim \frac{n}{D}.$$
	More precise, estimates are conceivable: if $\myvec{e}$ is uniformly distributed across all singular vectors, then we could hope that
	$$  \frac{1}{\|\myvec{e}\|^2} \sum_{a \in A}  \left|\left\langle \myvec{g}_a, \myvec{e} \right\rangle \right|^2 \sim Z^2,$$
	where $Z$ is the Marchenko-Pastur distribution modeling the singular values of the random matrix $X$. When $n \ll D$, then $Z \sim \sqrt{D} \pm \sqrt{n} \sim \sqrt{D}$ and we recover the usual estimate. As soon as $n$ starts approaching $D$, the distribution of $Z$ gets closer and closer to 0 and the inverse distribution $1/Z^2$ spreads over many scales. However, in principle, if $\myvec{e}$ is uniformly distributed over the singular vectors, then one could use this heuristic to predict the sharp constant to be expected when, for example $n = 0.99 \cdot D$. Basic numerics seems to indicate that this is a reasonable assumption.
	
	\subsection{Multiple Projections.} We now discuss the effect of averaging quantities like
	$$\sum_{a \in A}^{}{ \frac{\myvec{g}_a}{\|\myvec{g}_a\|^2} \myvec{\omega}_a}$$
	over multiple randomly chosen sets $A$. 
	\begin{lemma} Let $\myvec{X} \in \mathbb{R}^{N \times D}$ be a matrix with i.i.d. standard $\mathcal{N}(0,1)$ entries and let $\myvec{\omega} \in \mathbb{R}^N$ be a random vector all of whose entries are i.i.d. $\mathcal{N}(0,1)$. Let $A \subset \left\{1, \dots, N\right\}$ denote a random set of size $n$ (chosen uniformly at random among all $n-$element subsets of $A$). Then
		$$ \mathbb{E}_{{\textnormal{X},{\omega}}} \lim_{\ell \rightarrow \infty} \left\| \frac{1}{\ell} \sum_{i=1}^{\ell}{ \sum_{a \in A_i}^{}{ \frac{\myvec{g}_a}{\|\myvec{g}_a\|^2} \myvec{\omega}_a} } \right\| \leq \frac{n}{N \sqrt{D-2}}.$$
	\end{lemma}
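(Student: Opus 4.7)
The plan is to swap the $\ell\to\infty$ limit with $\mathbb{E}_{X,\omega}$ and then estimate the expected $\ell^2$-norm of an explicit $X,\omega$-dependent vector that no longer depends on the subsets $A_i$.

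First, for fixed $\myvec{X}$ and $\myvec{\omega}$, the summands
$T_i := \sum_{a \in A_i} \myvec{g}_a \myvec{\omega}_a/\|\myvec{g}_a\|^2$
are i.i.d.\ as $i$ varies, because the $A_i$ are drawn independently and uniformly from the $n$-element subsets of $\{1,\dots,N\}$. The strong law of large numbers gives
\[\frac{1}{\ell}\sum_{i=1}^{\ell} T_i \;\xrightarrow[\ell\to\infty]{\text{a.s.}}\; \mathbb{E}_A[T_1] \;=\; \sum_{a=1}^{N} \mathbb{P}(a \in A)\, \frac{\myvec{g}_a \myvec{\omega}_a}{\|\myvec{g}_a\|^2} \;=\; \frac{n}{N}\,\myvec{S},\]
where $\myvec{S} := \sum_{a=1}^{N} \myvec{g}_a\myvec{\omega}_a/\|\myvec{g}_a\|^2$. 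Continuity of the norm moves the almost-sure limit inside $\|\cdot\|$, and dominated convergence (justified by the uniform second-moment bound produced in the next step) allows exchanging the limit with $\mathbb{E}_{X,\omega}$, reducing the claim to an estimate for $\mathbb{E}_{X,\omega}\|(n/N)\myvec{S}\|$.

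Second, I would bound $\mathbb{E}\|\myvec{S}\|$ via Jensen's inequality, $\mathbb{E}\|\myvec{S}\| \le (\mathbb{E}\|\myvec{S}\|^2)^{1/2}$, and compute the second moment directly:
\[\mathbb{E}\|\myvec{S}\|^2 \;=\; \sum_{a,b=1}^{N} \mathbb{E}\!\left[\frac{\langle \myvec{g}_a, \myvec{g}_b\rangle}{\|\myvec{g}_a\|^2\|\myvec{g}_b\|^2}\,\myvec{\omega}_a\myvec{\omega}_b\right].\]
The off-diagonal terms vanish because $\mathbb{E}[\myvec{\omega}_a\myvec{\omega}_b]=0$ for $a\neq b$ and $\myvec{\omega}$ is independent of $\myvec{X}$; the diagonal terms factor as $\mathbb{E}[\myvec{\omega}_a^2]\cdot\mathbb{E}[\|\myvec{g}_a\|^{-2}] = 1\cdot 1/(D-2)$, reusing the inverse-$\chi^2$ identity invoked in the previous lemma. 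Hence $\mathbb{E}\|\myvec{S}\|^2 = N/(D-2)$ and $\mathbb{E}\|(n/N)\myvec{S}\| \le (n/N)\sqrt{N/(D-2)} = n/\sqrt{N(D-2)}$.

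The main obstacle is the gap between this estimate and the stated bound: the calculation above delivers $n/\sqrt{N(D-2)} = n/(\sqrt{N}\sqrt{D-2})$, which is weaker by a factor of $\sqrt{N}$ than the claimed $n/(N\sqrt{D-2})$. I do not see how to tighten the second-moment bound further: rotational symmetry forces each coordinate of $\myvec{S}$ to be a mean-zero random variable of variance $1/(D(D-2))$, the $N$ summands are independent, and the resulting $\sqrt{N}$ central-limit scaling of $\|\myvec{S}\|$ appears intrinsic. Because the bound $n/\sqrt{N(D-2)}$ is precisely the term that appears on the right-hand side of Theorem~1, whose proof this lemma is designed to supply, I believe the $N$ in the denominator of the lemma statement is a typographical error for $\sqrt{N}$, and the argument sketched above proves the intended inequality $\mathbb{E}\big\|\lim_{\ell}(1/\ell)\sum_i T_i\big\| \le n/(\sqrt{N}\sqrt{D-2})$.
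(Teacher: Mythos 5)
Your argument is essentially identical to the paper's: the same law-of-large-numbers reduction of the averaged sum to $\frac{n}{N}\sum_{a=1}^{N}\myvec{g}_a\myvec{\omega}_a/\|\myvec{g}_a\|^2$, the same vanishing of off-diagonal terms via independence of $\myvec{\omega}$, the same inverse-$\chi^2$ mean $1/(D-2)$, and the same Jensen/Cauchy--Schwarz step (you are somewhat more careful than the paper in justifying the exchange of limit and expectation, which is a point in your favor). Your diagnosis of the discrepancy is also correct: the paper's own proof terminates at exactly $\frac{n}{\sqrt{N}\sqrt{D-2}}$, which is the quantity that appears on the right-hand side of Theorem 1, so the $N$ in the denominator of the lemma's displayed bound is indeed a typographical error for $\sqrt{N}$.
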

	\begin{proof}
		We observe that the vectors $\myvec{g}_a$ are, albeit Gaussian random vectors, fixed once given and so are the $\myvec{\omega}_a$. Thus, the law of large numbers implies that averaging over many randomly chosen subsets $A \subset \left\{1, 2, \dots, N\right\}$ of size $A$ results, ultimately, in each coordinate being picked the same number of times and thus
		$$ \lim_{\ell \rightarrow \infty} \frac{1}{\ell} \sum_{i=1}^{\ell}{ \sum_{a \in A_i}^{}{ \frac{\myvec{g}_a}{\|\myvec{g}_a\|^2} \myvec{\omega}_a} } = \frac{n}{N} \sum_{a=1}^{N} \frac{\myvec{g}_a}{\|\myvec{g}_a\|^2} \myvec{\omega}_a.$$
		We have
		$$  \frac{n}{N} \sum_{a=1}^{N} \frac{\myvec{g}_a}{\|\myvec{g}_a\|^2} \myvec{\omega}_a = \frac{n}{N}  \sum_{a=1}^{N} \frac{\myvec{g}_a}{\|\myvec{g}_a\|} \frac{\myvec{\omega}_a}{\|\myvec{g}_a\|}.$$
		We interpret this as follows: the vector $\myvec{g}_a/\|\myvec{g}_a\|$ is uniformly distributed over the unit sphere in $\mathbb{R}^D$ (a consequence of the
		radial symmetry of the Gaussian distribution), the vector $\myvec{\omega}^* = (\myvec{\omega}_a/\|\myvec{g}_a\|)_{a=1}^{N}$ is interpreted as a random vector. Again, as a consequence
		of the radial symmetry, the vector $\myvec{g}_a/\|\myvec{g}_a\|$ and the size $\|\myvec{g}_a\|$ can be interpreted as independent random variables.
		We compute $\mathbb{E}_{{\textnormal{X},{\omega}}} \left\|  \sum_{a=1}^{N} \frac{\myvec{g}_a}{\|\myvec{g}_a\|} \frac{\myvec{\omega}_a}{\|\myvec{g}_a\|}\right\|^2 $ as
		\begin{align*}
			\sum_{a_1, a_2 = 1}^{N} \mathbb{E}_{{\textnormal{X},{\omega}}} \left\langle  \frac{g_{a_1}}{\|g_{a_1}\|} \frac{\myvec{\omega}_{a_1}}{\|\myvec{g}_{a_1}\|}, \frac{\myvec{g}_{a_2}}{\|\myvec{g}_{a_2}\|} \frac{\myvec{\omega}_{a_2}}{\|\myvec{g}_{a_2}\|} \right\rangle\\
			= \sum_{a = 1}^{N} \mathbb{E}_{{\textnormal{X},{\omega}}} \frac{\myvec{\omega}_{a}^2}{\|\myvec{g}_{a}\|^2}  
			=  \sum_{a = 1}^{N} \mathbb{E}_{{\textnormal{X}}} \frac{1}{\|\myvec{g}_{a}\|^2} =  \mathbb{E}_{} \frac{N}{\|\myvec{g}_{}\|^2}.
		\end{align*}
		This quantity is the mean of an inverse $\chi-$distribution which is $1/(D-2)$ for $D \geq 3$. Thus, using the Cauchy-Schwarz inequality, we get
		$$  \mathbb{E}_{{\textnormal{X},{\omega}}} \frac{n}{N} \left\|  \sum_{a=1}^{N} \frac{\myvec{g}_a}{\|\myvec{g}_a\|} \frac{\myvec{\omega}_a}{\|\myvec{g}_a\|}\right\| \leq \frac{n}{N} \frac{\sqrt{N}}{ \sqrt{D-2}} = \frac{n}{\sqrt{N} \sqrt{D-2}}.$$
	\end{proof}

	\section{Acknowledgement}
	The authors would like to thank Anna Gilbert and Holger Rauhut for useful discussions. 
	
	\bibliographystyle{IEEEtran}
	\bibliography{references}

\end{document}